 \newtheoremstyle{mytheorem}% name % cf. thmtest.tex of AMSLaTeX
 {3pt}%      Space above
 {3pt}%      Space below
 {\slshape}% Body font
 {}%         Indent amount (empty = no indent,
\numberwithin{equation}{section}
\theoremstyle{theorem}
\newtheorem{theorem}{Theorem}[section]
\newtheorem{corollary}[theorem]{Corollary}
\newtheorem{lemma}[theorem]{Lemma}
\theoremstyle{definition}
\newcommand{\arxiv}[1]{\href{http://arxiv.org/abs/#1}{arXiv:#1}}
\newcommand{\Keywords}[1]{\ifthenelse{\isempty{#1}}{}{\smallskip \smallskip \noindent \textbf{Keywords}. #1}}
\newcommand{\MSC}[2][2010]{\ifthenelse{\isempty{#2}}{}{\smallskip \smallskip \noindent \textbf{#1MSC}. #2}}
\newcommand{\abstractnote}[1]{\ifthenelse{\isempty{#1}}{}{\smallskip \smallskip \noindent \textsuperscript{\dag}#1}}
\def\specialsection{\@startsection{section}{1}%
  \z@{\linespacing\@plus\linespacing}{.5\linespacing}%
%  {\normalfont\centering}}% DELETED
  {\normalfont}}% NEW
\def\section{\@startsection{section}{1}%
  \z@{.7\linespacing\@plus\linespacing}{.5\linespacing}%
%  {\normalfont\scshape\centering}}% DELETED
  {\normalfont\scshape}}% NEW
\patchcmd{\@settitle}{\uppercasenonmath\@title}{\Large\boldmath}{}{}
\patchcmd{\@settitle}{\begin{center}}{\begin{flushleft}}{}{}
\patchcmd{\@settitle}{\end{center}}{\end{flushleft}}{}{}
\patchcmd{\@setauthors}{\MakeUppercase}{\normalsize}{}{}
\patchcmd{\@setauthors}{\centering}{\raggedright}{}{}
\patchcmd{\section}{\scshape}{\large\bfseries\boldmath}{}{}
\patchcmd{\subsection}{\bfseries}{\bfseries\boldmath}{}{}
\renewcommand{\@secnumfont}{\bfseries}
\patchcmd{\@startsection}{\@afterindenttrue}{\@afterindentfalse}{}{}
\patchcmd{\abstract}{\leftmargin3pc}{\leftmargin1pc}{}{}
\def\maketitle{\par
  \@topnum\z@ % this prevents figures from falling at the top of page 1
  \@setcopyright
  \thispagestyle{empty}% this sets first page specifications
  \ifx\@empty\shortauthors \let\shortauthors\shorttitle
  \else \andify\shortauthors
  \fi
  \@maketitle@hook
  \begingroup
  \@maketitle
  \toks@\@xp{\shortauthors}\@temptokena\@xp{\shorttitle}%
  \toks4{\def\\{ \ignorespaces}}% defend against questionable usage
  \edef\@tempa{%
    \@nx\markboth{\the\toks4
      \@nx\MakeUppercase{\the\toks@}}{\the\@temptokena}}%
  \@tempa
  \endgroup
  \c@footnote\z@
  \@cleartopmattertags
}
\newcommand{\pt}{p_{3,3}}
\title[Congruences for $2$-color partition triples]{Ramanujan-type congruences for $2$-color partition triples}
\author[S. Chern]{Shane Chern}
\address[S. Chern]{Department of Mathematics, The Pennsylvania State University, University Park, PA 16802, USA}
\email{shanechern@psu.edu}
\author[C. Wang]{Chun Wang}
\address[C. Wang]{Department of Mathematics, East China Normal University, 500 Dongchuan Road, Shanghai 200241, PR China}
\email{wangchunmath@outlook.com}
\date{}
\begin{document}

{\footnotesize\noindent \textit{Preprint} (2017). Available at \arxiv{1706.05667}.}

\bigskip \bigskip

\maketitle

\begin{abstract}

Let ${p}_{3,3}(n)$ denote the number of $2$-color partition triples of $n$ where one of the colors appears only in parts that are multiples of $3$. In this paper, we shall establish some interesting Ramanujan-type congruences for ${p}_{3,3}(n)$.

\Keywords{Ramanujan-type congruences, $2$-color partition triples, dissection identities.}

\MSC{Primary 11P83; Secondary 05A17.}

\end{abstract}

\section{Introduction}

A \textit{partition} of a natural number $n$ is a weakly decreasing sequence of positive integers whose sum equals $n$. Let $p(n)$ be the number of partitions of $n$. We know that its generating function is
$$\sum_{n\ge 0}p(n)q^n=\frac{1}{(q;q)_\infty},$$
where for $|q|<1$, the shifted factorial is defined by
$$(a;q)_{\infty}:=\prod_{k\ge 0}(1-aq^k).$$

In 1919, Ramanujan \cite{Ram1919} discovered the following celebrated congruences
\begin{align*}
p(5n+4)&\equiv 0\pmod{5},\\
p(7n+5)&\equiv 0\pmod{7},\\
p(11n+6)&\equiv 0\pmod{11}.
\end{align*}
As an analogue of the ordinary partition function, Chan \cite{Cha2010} defined the cubic partition function $a(n)$ by
$$\sum_{n\ge 0}a(n)q^n:=\frac{1}{(q;q)_\infty(q^2;q^2)_\infty},$$
which enumerates the number of $2$-color partitions of $n$ where one of the colors appears only in multiples of $2$. He also established the partition congruence
$$a(3n+2)\equiv0\pmod{3}.$$
Subsequently, many authors studied the arithmetic properties of $2$-color partitions with
one of the colors appearing only in multiples of $k$; see \cite{ABD2015, BS2013, CL2009, Che2016, CD2016} for details.

Meanwhile, Chan and Cooper \cite{CC2010} studied the divisibility properties of the function $c(n)$ defined by
$$\sum_{n\ge 0}c(n)q^n:=\frac{1}{(q;q)^2_\infty(q^3;q^3)^2_\infty}$$
and obtained the following partition congruence
$$c(2n+1)\equiv0\pmod{2}.$$
Here the partition function $c(n)$ can be regarded as the number of $2$-color partition pairs of $n$ where one of the colors appears only in parts that are multiples of $3$. Moreover, by considering the generalized partition function $p_{[c^\ell d^m]}(n)$ defined by the generating function
$$\sum_{n\ge 0}p_{[c^\ell d^m]}(n)q^n
:=\frac{1}{(q^c;q^c)^\ell_\infty(q^d;q^d)^m_\infty}$$
and appealing to Ramanujan's modular equations, Baruah and Ojah \cite{BO2012} presented new proofs of several formulas obtained by Chan and Toh \cite{CT2010} and established more Ramanujan-type congruences, including $c(4n+3)\equiv 0\pmod{4}$.

Inspired by their work, we shall study the following 2-color partition triple function
\begin{equation}\label{eq1}
\sum_{n\ge 0}{{p}_{3,3}}(n)q^n:=\frac{1}{(q;q)^3_{\infty}(q^3;q^3)^3_{\infty}}.
\end{equation}

\begin{theorem}\label{th:1}
For $n\ge 0$, we have
\begin{gather}
\pt(12n+6,9)\equiv 0 \pmod{2},\label{eq:2-12}\\
\pt(6n+4)\equiv 0 \pmod{4},\label{eq:4-6}\\
\pt(3n+1)\equiv 0 \pmod{3},\label{eq:3-3}\\
\pt(3n+2)\equiv 0 \pmod{9},\label{eq:9-3}\\
\pt(9n+5,8)\equiv 0 \pmod{27},\label{eq:27-9}\\
\pt(5n+3)\equiv 0 \pmod{5}.\label{eq:5-5}
\end{gather}
\end{theorem}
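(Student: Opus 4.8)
The plan is to study the generating function $\sum_{n\ge0}\pt(n)q^n=1/(f_1^3f_3^3)$, where I abbreviate $f_k:=(q^k;q^k)_\infty$, by reducing it modulo each prime power occurring in the theorem and then reading off the congruence from a suitable dissection. The basic tools are the Frobenius-type congruences $f_k^p\equiv f_{pk}\pmod p$ together with the refinements $f_k^2\equiv f_{2k}\pmod2$ and $f_k^4\equiv f_{2k}^2\pmod4$. The guiding principle is that $\pt(mn+r)$ is recovered from the residue-$r$ part of an $m$-dissection, so for each congruence it suffices to show that this part either is empty in the relevant progression or carries an explicit factor of the modulus.

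For the $3$-adic congruences I would first record the exact $3$-dissection of $f_1^3$. The crude reduction $1/(f_1^3f_3^3)\equiv 1/f_3^4\pmod3$, a power series in $q^3$, already gives \eqref{eq:3-3} and the modulo-$3$ part of \eqref{eq:9-3}. To upgrade the class $2\pmod3$ I split Jacobi's identity $f_1^3=\sum_{n\ge0}(-1)^n(2n+1)q^{n(n+1)/2}$ according to $n\bmod 3$: the triangular exponents never lie in the class $2\pmod3$, the terms with $n\equiv1\pmod3$ assemble into $-3q\,f_9^3$, and the remaining terms form a power series $E(q^3)$ in $q^3$ with $E(q^3)\equiv f_3\pmod3$, so that
\[
f_1^3=E(q^3)-3q\,f_9^3 .
\]
Inverting the geometric series modulo $27$ gives, with $E=E(q^3)$,
\[
\frac{1}{f_1^3}\equiv\frac{1}{E}+\frac{3q\,f_9^3}{E^2}+\frac{9q^2 f_9^6}{E^3}\pmod{27},
\]
and after multiplying by the $q^3$-series $1/f_3^3$ the residue-$1$ part is divisible by $3$ while the residue-$2$ part is divisible by $9$, which is \eqref{eq:9-3}. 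For \eqref{eq:27-9} I would push the residue-$2$ part, namely $9q^2 f_9^6/(E^3 f_3^3)$, one step further: reducing the fraction modulo $3$ via $E^3\equiv f_3^3\equiv f_9\pmod3$ and $f_9^6\equiv f_{27}^2\pmod3$ shows it is congruent to $9q^2 f_{27}^2/f_9^2$, which is $9q^2$ times a power series in $q^9$. Hence this part is supported on exponents $\equiv2\pmod9$, forcing $\pt(9n+5)\equiv\pt(9n+8)\equiv0\pmod{27}$.

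For the $2$-adic congruences I would use $f_1^4\equiv f_2^2\pmod4$ (and the same with $q\mapsto q^3$) to obtain
\[
\frac{1}{f_1^3f_3^3}\equiv\frac{f_1f_3}{f_2^2 f_6^2}\pmod4 .
\]
Since $1/(f_2^2 f_6^2)$ is a series in $q^2$, extracting the progression $6n+4$ for \eqref{eq:4-6}, and---after the further reduction $f_2^2\equiv f_4\pmod2$---the progressions $12n+6$ and $12n+9$ for \eqref{eq:2-12}, reduces to the $2$-dissection $f_1 f_3=A(q^2)+qB(q^2)$. Substituting this dissection and collecting the even part $A$ (for the even progressions) or the odd part $B$ (for $12n+9$) produces eta-quotients in $q^2$ whose coefficients I would show are divisible by $4$ and by $2$ respectively. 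Establishing the dissection of $f_1 f_3$ and carrying the computation to the second $2$-adic order needed for the modulus $4$ requires some care.

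Finally, for \eqref{eq:5-5} I would use $f_1^5\equiv f_5\pmod5$ and $f_3^5\equiv f_{15}\pmod5$ to write
\[
\frac{1}{f_1^3 f_3^3}=\frac{f_1^2 f_3^2}{f_1^5 f_3^5}\equiv\frac{f_1^2 f_3^2}{f_5 f_{15}}\pmod5 .
\]
As $1/(f_5 f_{15})$ is a series in $q^5$, the congruence follows once one shows that the coefficient of $q^{5n+3}$ in $f_1^2 f_3^2$ is divisible by $5$. I expect this to be the main obstacle of the whole theorem: unlike the $3$-adic case the offending progression is not empty, so the factor of $5$ must come from a genuine cancellation rather than from an absent residue class. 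The plan is to expand $f_1^2 f_3^2=f_2 f_6\,\varphi(-q)\varphi(-q^3)$, where $\varphi(-q)=\sum_{m\in\mathbb Z}(-1)^m q^{m^2}$, and to $5$-dissect, tracking the binary quadratic form $m^2+3n^2$ modulo $5$ (equivalently invoking the $5$-dissection of $f_1$) in order to exhibit the factor of $5$ in the class $3\pmod5$.
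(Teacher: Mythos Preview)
Your $3$-adic arguments (for \eqref{eq:3-3}, \eqref{eq:9-3}, \eqref{eq:27-9}) are essentially the paper's: it too writes $f_1^3=P(q^3)-3qf_9^3$ with $P(q^3)\equiv f_3\pmod 3$, extracts residues, and reduces the residue-$2$ part modulo $3$ to a $q^3$-series. For the $2$-adic congruences your sketch stops one step short of the key move. After your reduction $\sum_n\pt(n)q^n\equiv f_1f_3/(f_2^2f_6^2)\pmod 4$ and a single $2$-extraction via the known $2$-dissection of $f_1f_3$, the paper simplifies the even part to $\sum_n\pt(2n)q^n\equiv f_1^3/f_3^3\pmod 4$ and then reuses the \emph{$3$-dissection} $f_1^3=P(q^3)-3qf_9^3$; since no exponent is $\equiv 2\pmod 3$, \eqref{eq:4-6} follows immediately. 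For \eqref{eq:2-12} the paper reverses your order: it first $3$-dissects exactly to isolate $\sum_n\pt(3n)q^n$, reduces that mod $2$, and only then $2$-dissects twice using the standard dissections of $f_1f_3$, $f_1/f_3$ and $f_3/f_1$, arriving at a literal two-term cancellation ($f_1^2f_3^3+f_1^2f_3^3\equiv 0$, respectively $f_3^6/f_1+f_3^6/f_1\equiv 0$) in each of the cases $12n+6$, $12n+9$. Your plan to $2$-dissect first and then ``show divisibility'' is not wrong, but without the $3$-dissection of $f_1^3$ (for mod $4$) and the explicit pair cancellation (for mod $2$) there is no mechanism in your outline that produces the required factor.

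The genuine gap is \eqref{eq:5-5}. Your reduction to showing that the $q^{5n+3}$-coefficient of $f_1^2f_3^2=f_2f_6\,\varphi(-q)\varphi(-q^3)$ is divisible by $5$ is correct, but the proposed attack via the form $m^2+3n^2$ does not close: all five residue classes occur in $m^2+3n^2\pmod 5$, and multiplying by $f_2f_6$ (each supported on three residues) gives many contributions to class $3$ with no visible coefficient divisibility. The paper avoids this entirely by first applying the \emph{exact} $3$-dissection
\[
\frac{1}{f_1^3f_3^3}=\frac{f_1^6f_9^3}{f_3^{15}}+9q\,\frac{f_1^3f_9^6}{f_3^{15}}+27q^2\,\frac{f_9^9}{f_3^{15}},
\]
and only then reducing mod $5$. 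The point of introducing $f_9$ is that the Jacobi series $f_9^3=\sum(-1)^n(2n+1)q^{9n(n+1)/2}$, viewed mod $5$, is supported on just \emph{two} residue classes: the part with exponent $\equiv 2\pmod 5$ comes from $n\equiv 2\pmod 5$, where $2n+1\equiv 0\pmod 5$. With analogous simplifications for $f_1$, $f_1^3$, $f_9$ and $1/f_9$, one checks by inspection that no term in any of the three summands lands in the class $3\pmod 5$. So the paper's mod-$5$ proof is again a ``missing residue class'' argument, made possible by the preliminary $3$-dissection---a device your direct route through $f_1^2f_3^2$ does not supply.
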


\begin{theorem}\label{th:2}
For $n\ge 0$, $\alpha\ge 1$, and odd prime $p$ with
$$\left(\frac{-3}{p}\right)=-1,$$
we have
\begin{equation}\label{eq:p-cong}
\pt\left(9p^{2\alpha}n+\frac{p^{2\alpha-1}(3p+18j)+1}{2}\right)\equiv 0 \pmod{27},\\
\end{equation}
where $j=1$, $2$, $\ldots$, $p-1$.
\end{theorem}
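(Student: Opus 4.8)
The plan is to collapse everything onto the single residue class $\equiv 2 \pmod 9$ and to recognize the relevant generating function, modulo $27$, as the reduction of a weight-one complex-multiplication form attached to $\mathbb{Q}(\sqrt{-3})$. First I would record the quick check that the shift $M_\alpha := \frac{p^{2\alpha-1}(3p+18j)+1}{2}$ satisfies $M_\alpha \equiv 2 \pmod 9$ (using $p^{2\alpha}\equiv 1\pmod 3$), so every index $N = 9p^{2\alpha}n + M_\alpha$ in \eqref{eq:p-cong} lies in the progression $N = 9m+2$. Building on \eqref{eq:9-3} and \eqref{eq:27-9} from Theorem~\ref{th:1}, I would then use the same $3$-dissection machinery to establish a generating-function identity of the shape
\begin{equation*}
\sum_{n\ge 0}\pt(9n+2)\,q^n \equiv 9\,G(q) \pmod{27},
\end{equation*}
where $G(q)\in\mathbb{Z}[[q]]$ and, crucially, its reduction $\overline G \in \mathbb{F}_3[[q]]$ is a nonzero scalar multiple of the mod-$3$ reduction of an explicit CM eta-quotient for $\mathbb{Q}(\sqrt{-3})$. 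Concretely, the goal is an identity $d(m) \equiv c\,e(6m+1)\pmod 3$ for the coefficients $d(m)$ of $G$ and a fixed constant $c$, where $e(N)=\sum_{d\mid N}\left(\frac{d}{3}\right)$ is the usual multiplicative divisor-character sum. Proving this identity, i.e.\ carrying the $3$-adic dissection all the way down to the level of mod-$3$ coefficients, is the main obstacle; everything after it is formal.

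Granting the identity, the congruence $\pt(9m+2)\equiv 0 \pmod{27}$ reduces to $e(6m+1)\equiv 0 \pmod 3$, and since $e$ is integer-valued this is just the exact vanishing $e(6m+1)=0$, regardless of the value of the scalar $c$. The arithmetic of $e$ is classical: it is multiplicative with $e(p^{k}) = \sum_{i=0}^{k}\left(\frac{p}{3}\right)^{i}$. For a prime $p$ with $\left(\frac{-3}{p}\right)=-1$, quadratic reciprocity gives $\left(\frac{-3}{p}\right)=\left(\frac{p}{3}\right)=-1$, that is $p\equiv 2 \pmod 3$ (so $p$ is inert in $\mathbb{Q}(\sqrt{-3})$ and $p\ge 5$). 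Then $e(p^{k})=1-1+1-\cdots$ vanishes for every odd $k$, and by multiplicativity $e(N)=0$ as soon as such an inert prime divides $N$ to an odd power.

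It remains to feed the index into this criterion. Writing $N = 9m+2$ gives $m=(N-2)/9$, and a direct substitution of $N = 9p^{2\alpha}n + M_\alpha$ followed by simplification yields
\begin{equation*}
6m+1 = \frac{2N-1}{3} = p^{2\alpha-1}\bigl(p(6n+1)+6j\bigr).
\end{equation*}
The cofactor satisfies $p(6n+1)+6j \equiv 6j \pmod p$, and because $1\le j\le p-1$ with $p\ge 5$ coprime to $6$ we have $6j\not\equiv 0\pmod p$; hence $p\nmid\bigl(p(6n+1)+6j\bigr)$ and the exact power of $p$ dividing $6m+1$ is $2\alpha-1$, which is odd. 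By the preceding paragraph $e(6m+1)=0$, so $\pt(N)\equiv 0\pmod{27}$, uniformly in $\alpha\ge 1$ and in $j$. Note that this factorization exhibits the odd $p$-power explicitly, so no induction on $\alpha$ is required; the role of the hypothesis $1\le j\le p-1$ is precisely to guarantee that the cofactor is a unit modulo $p$.

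In summary, the argument is driven by one genuinely laborious ingredient, the mod-$27$ dissection identity of the first paragraph, and two essentially mechanical ones: the vanishing of $e$ at inert primes of odd multiplicity, and the one-line index computation $\frac{2N-1}{3}=p^{2\alpha-1}\bigl(p(6n+1)+6j\bigr)$. I would expect the first of these to require the bulk of the work, while the CM/multiplicativity input and the change of variables are routine.
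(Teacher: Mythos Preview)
Your approach is correct but genuinely different from the paper's. Both routes share the starting identity
\[
\sum_{n\ge 0}\pt(9n+2)\,q^n \equiv 9\,f_1f_3 \pmod{27},
\]
which the paper obtains from \eqref{eq:3n+2mod27} via $f_3^4\equiv f_3f_9\pmod 3$ and a trivial extraction; so your $G$ is exactly $f_1f_3$, not merely congruent to it. From here the paper stays entirely within elementary $q$-series: it applies the Cui--Gu $p$-dissection of $f(-q)$ (Lemma~\ref{cg13}) to each factor, observes that when $\left(\tfrac{-3}{p}\right)=-1$ the congruence $(6k+1)^2+3(6m+1)^2\equiv 0\pmod p$ forces $k=m=(\pm p-1)/6$, deduces $\sum_{n}\pt\!\big(9p^{2\alpha-1}n+\tfrac{3p^{2\alpha}+1}{2}\big)q^n\equiv 9f(-q^p)f(-q^{3p})\pmod{27}$ by induction on~$\alpha$, and reads off the vanishing at the residues $j=1,\dots,p-1$. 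Your route instead exploits arithmetic multiplicativity: $q\,G(q^6)=\eta(6\tau)\eta(18\tau)$ is itself a weight-one CM eigenform for $\mathbb{Q}(\sqrt{-3})$, so its $n$th coefficient already vanishes \emph{exactly} whenever an inert prime divides $n$ to odd order---which means your ``main obstacle'' is lighter than you anticipate, and the detour through the Eisenstein divisor sum $e$ and a further mod-$3$ reduction is unnecessary. The trade-off: the paper's argument is self-contained (nothing beyond pentagonal numbers and a Legendre-symbol computation), while yours is more conceptual, handles all $\alpha$ at once through the single factorization $6m+1=p^{2\alpha-1}\bigl(p(6n+1)+6j\bigr)$, and makes transparent why the hypothesis $\left(\tfrac{-3}{p}\right)=-1$ is exactly the inertness condition. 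One wording slip: ``since $e$ is integer-valued this is just the exact vanishing $e(6m+1)=0$'' reads as if $e\equiv 0\pmod 3$ forces $e=0$, which is false (e.g.\ $e(49)=3$); what you actually need, and what your odd-valuation argument actually proves, is the sufficient condition $e(6m+1)=0$.
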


\section{Preliminaries}

Throughout this paper, we write $f_k:=(q^k;q^k)_\infty$ for positive integers $k$ for notational convenience.

The following $2$-dissections are necessary.
\begin{lemma}
It holds that
\begin{align}
f_1f_3&=\frac{f_2 f_8^2 f_{12}^4}{f_4^2 f_6 f_{24}^2}-q\frac{f_4^4 f_6 f_{24}^2}{f_2 f_8^2 f_{12}^2},\label{f1f3}\\
\frac{f_1}{f_3}&=\frac{f_2f_{16}f_{24}^2}{f_6^2f_8f_{48}}-q\frac{f_2f_8^2f_{12}f_{48}}{f_4f_6^2f_{16}f_{24}},\label{f1df3}\\
\frac{f_3}{f_1}&=\frac{f_4f_6f_{16}f_{24}^2}{f_2^2f_8f_{12}f_{48}}+q\frac{f_6f_8^2f_{48}}{f_2^2f_{16}f_{24}}.\label{eq:f3df1}
\end{align}
\end{lemma}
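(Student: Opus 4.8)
The plan is to read each identity as a \emph{$2$-dissection} --- a splitting of a $q$-series into its even- and odd-power parts --- and to reduce all three to standard dissections of Ramanujan's theta functions, converting back to eta quotients by the Jacobi triple product. Throughout I would use the evaluations $\varphi(q)=f_2^5/(f_1^2f_4^2)$, $\psi(q)=f_2^2/f_1$, and $f_1=f(-q)$ (with $f(a,b)=\sum_{n=-\infty}^{\infty}a^{n(n+1)/2}b^{n(n-1)/2}$), together with the elementary product formulas $\varphi(-q)=f_1^2/f_2$, $\psi(-q)=f_1f_4/f_2$, and the identity $\varphi(q)\varphi(-q)=\varphi(-q^2)^2$.

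For \eqref{f1f3}, the first step is to pass from the product $f_1f_3$ to a product of $\psi$'s, via
\begin{equation*}
\psi(q)\psi(q^3)=\frac{f_2^2f_6^2}{f_1f_3},\qquad\text{so that}\qquad f_1f_3=\frac{f_2^2f_6^2}{\psi(q)\psi(q^3)}.
\end{equation*}
I would then $2$-dissect each factor using the standard splitting $\psi(q)=f(q^6,q^{10})+qf(q^2,q^{14})$ (obtained by separating $\psi(q)=\sum_n q^{2n^2-n}$ according to the parity of $n$), multiply the two dissections out, collect even and odd powers of $q$, and rewrite the resulting theta products as eta quotients by the triple product to obtain $\psi(q)\psi(q^3)=X+qY$ with $X,Y$ explicit in the $f_k$. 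Replacing $q\mapsto-q$ gives $\psi(-q)\psi(-q^3)=X-qY$, and since $\psi(-q)\psi(-q^3)=f_1f_3f_4f_{12}/(f_2f_6)$ one reaches the clean relation $X^2-q^2Y^2=f_2f_4f_6f_{12}$. Rationalizing then yields $f_1f_3=f_2^2f_6^2(X-qY)/(f_2f_4f_6f_{12})$, which upon simplification should be exactly \eqref{f1f3}.

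For \eqref{f1df3} and \eqref{eq:f3df1}, the idea is to build them from \eqref{f1f3} by writing $f_1/f_3=(f_1f_3)\cdot f_3^{-2}$ and $f_3/f_1=(f_1f_3)\cdot f_1^{-2}$. The needed dissection of $f_1^{-2}$ follows from $f_1^2=f_2\varphi(-q)$ together with the standard $\varphi(-q)=\varphi(q^4)-2q\psi(q^8)$ and the rationalizing identity above; explicitly
\begin{equation*}
\frac{1}{f_1^2}=\frac{f_8^5}{f_2^5f_{16}^2}+2q\frac{f_4^2f_{16}^2}{f_2^5f_8},
\end{equation*}
and $f_3^{-2}$ is obtained by $q\mapsto q^3$. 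The appearance of $f_{16},f_{48}$ here matches the level seen on the right of \eqref{f1df3}--\eqref{eq:f3df1}. Multiplying each dissection against $f_1f_3=U-qV$ and separating even and odd parts leaves, for each identity, two statements of the form ``(eta monomial) $\pm\,q^{\,k}$(eta monomial) $=$ (single eta monomial).''

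The main obstacle is precisely this last bookkeeping: the theta-product expansions and, above all, the ``collapse'' of a sum of two eta quotients into a single one are genuine eta-quotient identities, not mere algebra. I would discharge each either by a further (routine) theta dissection or --- most safely --- by noting that after clearing denominators each is an identity between holomorphic eta products on a congruence subgroup $\Gamma_0(N)$ (a multiple of $48$ suffices), so that by Sturm's bound it suffices to verify agreement of finitely many leading $q$-coefficients. As internal checks I would confirm that the product of the right-hand sides of \eqref{f1df3} and \eqref{eq:f3df1} equals $1$ (its odd part vanishing identically) and that $X^2-q^2Y^2=f_2f_4f_6f_{12}$, both of which fall to the same finite verification.
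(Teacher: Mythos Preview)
The paper does not actually prove these identities: its entire proof is a one-line citation to Hirschhorn's book, identifying \eqref{f1f3}, \eqref{f1df3}, \eqref{eq:f3df1} as (30.12.1), (30.10.1), and (30.10.3) there. Your proposal, by contrast, sketches a self-contained derivation. The outline is sound: for \eqref{f1f3}, passing through $\psi(-q)\psi(-q^3)=f_1f_3f_4f_{12}/(f_2f_6)$ and the $2$-dissection of $\psi(q)\psi(q^3)$ is a standard and correct route; for \eqref{f1df3} and \eqref{eq:f3df1}, multiplying \eqref{f1f3} against the known dissection of $f_1^{-2}$ (resp.\ $f_3^{-2}$) is legitimate, and your Sturm-bound fallback for the remaining ``collapse'' identities is valid since everything lives in a finite-dimensional space of modular forms on $\Gamma_0(48)$. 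The trade-off is that the paper's approach costs one sentence but outsources the content entirely to a reference, whereas yours is self-contained but considerably longer---and the real labor sits precisely in the collapse steps you flag, which are genuine theta identities rather than algebra. If you carry this out, the cleanest path for \eqref{f1df3} and \eqref{eq:f3df1} is probably to dissect $\varphi(-q)/\varphi(-q^3)$ and $\varphi(-q^3)/\varphi(-q)$ directly (via $\varphi(-q)=\varphi(-q^4)-2q\psi(q^8)$ and a rationalization), rather than to route through \eqref{f1f3}; this avoids one layer of product expansion and matches how Hirschhorn himself organizes the proofs.
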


\begin{proof}
Here \eqref{f1f3}, \eqref{f1df3} and \eqref{eq:f3df1} are respectively (30.12.1), (30.10.1) and (30.10.3) in \cite{Hir2017}.
\end{proof}

We also need the following $3$-dissection identities.

\begin{lemma}\label{le:3-dis}
It holds that
\begin{align}
f_1^3&=P(q^3)-3qf_9^3,\label{f13-3dis}\\
\frac{1}{f_1^3}&=\frac{f_9^3}{f_3^{12}}\Big(P(q^3)^2+3qf_9^3P(q^3)
+9q^2f_9^6\Big),\label{eq:1/f13--3dis}
\end{align}
where
\begin{align}
P(q)=\frac{f_2^6f_3}{f_1^2f_6^2}+3q\frac{f_1^2f_6^6}{f_2^2f_3^3}.\label{eq:Pf}
\end{align}
\end{lemma}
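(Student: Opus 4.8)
The plan is to establish the two $3$-dissections in turn, deriving \eqref{eq:1/f13--3dis} from \eqref{f13-3dis} by a purely formal rationalization, so that the only genuine analytic input is a pair of classical eta-quotient identities. First I would prove \eqref{f13-3dis}. Starting from Jacobi's identity $f_1^3=\sum_{n\ge0}(-1)^n(2n+1)q^{n(n+1)/2}$, sort the terms by the residue of the triangular number $T_n=n(n+1)/2$ modulo $3$. A quick check shows $T_n\equiv0\pmod3$ when $n\equiv0,2\pmod3$, $T_n\equiv1\pmod3$ when $n\equiv1\pmod3$, and $T_n\not\equiv2\pmod3$ ever; hence $f_1^3$ has no terms on exponents $\equiv2\pmod3$ and splits as $f_1^3=S(q^3)-3qf_9^3$, where $S(q^3)$ collects the exponent-$\equiv0$ part. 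Indeed, writing $n=3m+1$ in the residue-$1$ part and using $(-1)^{3m+1}(6m+3)=-3(-1)^m(2m+1)$ together with $T_{3m+1}=1+9T_m$, that part collapses, via Jacobi's identity in the base $q^9$, to exactly $-3qf_9^3$. It then remains to identify $S(q^3)$ with $P(q^3)$, i.e.\ to show $S(q)=P(q)$.

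This identification is the crux, and it is where one must leave elementary series manipulation. I would recognize it as the statement $P(q)=f_1\,a(q)$, where $a(q)=\sum_{m,n\in\mathbb{Z}}q^{m^2+mn+n^2}$ is Borwein's cubic theta function. Combining the two classical facts $a(q)=(f_1^3+9qf_9^3)/f_3$ and the $3$-dissection $a(q)=a(q^3)+6q\,f_9^3/f_3$ gives $f_1^3=f_3\,a(q^3)-3qf_9^3$; matching this against the shape obtained above forces $S(q^3)=f_3\,a(q^3)$, so that \eqref{f13-3dis} is equivalent to the level-$6$ representation $a(q)=\tfrac{f_2^6f_3}{f_1^3f_6^2}+3q\tfrac{f_1f_6^6}{f_2^2f_3^3}$ implicit in \eqref{eq:Pf}. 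This last eta-quotient identity is the one I expect to be the main obstacle: it is a known identity that can be confirmed from the $2$-dissection of $a(q)$, or, absent a convenient reference, by verifying agreement of $q$-expansions up to the Sturm bound once both sides are placed in a space of modular forms on a suitable $\Gamma_0(N)$.

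Finally, \eqref{eq:1/f13--3dis} follows from \eqref{f13-3dis} formally. Setting $x=P(q^3)$ and $y=3qf_9^3$, so that $f_1^3=x-y$, the factorization $\frac{1}{x-y}=\frac{x^2+xy+y^2}{x^3-y^3}$ reproduces the numerator $P(q^3)^2+3qf_9^3P(q^3)+9q^2f_9^6$ of \eqref{eq:1/f13--3dis} verbatim, and it only remains to show $x^3-y^3=f_3^{12}/f_9^3$, equivalently $P(q)^3=f_1^{12}/f_3^3+27qf_3^9$. Via $P=f_1a$, this is just $f_1^3$ times the Borwein cubic relation $a(q)^3=b(q)^3+c(q)^3$ with $b(q)=f_1^3/f_3$ and $c(q)^3=27qf_3^9/f_1^3$, hence again classical. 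Since the prefactor $f_9^3/f_3^{12}$ and each of $P(q^3)$, $f_9^6$ are power series in $q^3$, the three summands in the numerator carry exponents $\equiv0,1,2\pmod3$ respectively, so the resulting expression is manifestly the $3$-dissection of $1/f_1^3$, completing the proof.
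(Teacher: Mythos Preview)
Your argument is correct, and for \eqref{eq:1/f13--3dis} it is algebraically the same maneuver as the paper's, but you route the denominator evaluation through heavier machinery than necessary. The paper derives \eqref{eq:1/f13--3dis} by substituting $q\mapsto\omega q$ and $q\mapsto\omega^2 q$ into \eqref{f13-3dis} (with $\omega$ a primitive cube root of unity) and multiplying; since $P(q^3)$ and $f_9^3$ are invariant under these substitutions, the product $F(\omega q)F(\omega^2 q)$ (with $F=f_1^3$) reproduces your numerator $P(q^3)^2+3qf_9^3P(q^3)+9q^2f_9^6$, while the triple product $F(q)F(\omega q)F(\omega^2 q)$ is computed \emph{directly} from the infinite product as $\bigl(f_1(q)f_1(\omega q)f_1(\omega^2 q)\bigr)^3=(f_3^4/f_9)^3=f_3^{12}/f_9^3$. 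This is exactly your identity $x^3-y^3=f_3^{12}/f_9^3$, obtained in one line without invoking the Borwein relation $a^3=b^3+c^3$; your appeal to that cubic identity is thus an avoidable detour.

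For \eqref{f13-3dis} and \eqref{eq:Pf} the paper simply cites Hirschhorn's book, whereas you sketch an actual derivation via Jacobi's identity and the Borwein theta function $a(q)$. Your derivation is fine and more self-contained, at the cost of needing the level-$6$ eta-quotient formula for $a(q)$ (equivalently the $2$-dissection of $a(q)$), which as you note is itself a standard reference item. So the two approaches trade one citation for another; the paper's packaging is shorter, while yours makes the provenance of $P(q)$ more transparent.
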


\begin{proof}
For \eqref{f13-3dis}, see \cite[Eq.~(21.3.3)]{Hir2017}. One may obtain \eqref{eq:1/f13--3dis} by replacing $q$ with $\omega q$ and $\omega^2 q$ in \eqref{f13-3dis} and multiplying the two results. Finally, \eqref{eq:Pf} follows from (21.3.7), (21.1.1) and (22.11.6) in \cite{Hir2017}.
\end{proof}

\begin{corollary}
It holds that
\begin{equation}\label{eq:f13diss}
\frac{1}{f_1^3}=\frac{f_{9}^3}{f_{3}^{12}}\Big(f_1^6+9qf_1^3 f_{9}^3+27q^2 f_{9}^6\Big).
\end{equation}
\end{corollary}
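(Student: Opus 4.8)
The plan is to eliminate the auxiliary series $P(q^3)$ between the two identities of Lemma~\ref{le:3-dis}. First I would rearrange \eqref{f13-3dis} to isolate $P(q^3)$, giving
\begin{equation*}
P(q^3)=f_1^3+3qf_9^3.
\end{equation*}
Then I would substitute this directly into the bracketed factor of \eqref{eq:1/f13--3dis}, reducing the entire statement to a single polynomial expansion.

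Concretely, writing $A:=P(q^3)=f_1^3+3qf_9^3$, the factor appearing in \eqref{eq:1/f13--3dis} is $A^2+3qf_9^3A+9q^2f_9^6$. Expanding term by term, one has $A^2=f_1^6+6qf_1^3f_9^3+9q^2f_9^6$ and $3qf_9^3A=3qf_1^3f_9^3+9q^2f_9^6$, while the remaining term is simply $9q^2f_9^6$. Adding these three contributions and collecting like powers — the coefficient of $qf_1^3f_9^3$ being $6+3=9$ and that of $q^2f_9^6$ being $9+9+9=27$ — yields exactly $f_1^6+9qf_1^3f_9^3+27q^2f_9^6$, which is the bracketed factor in \eqref{eq:f13diss}. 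Multiplying by the prefactor $f_9^3/f_3^{12}$ carried over verbatim from \eqref{eq:1/f13--3dis} then gives the claimed identity.

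There is no genuine obstacle here: the corollary is an immediate algebraic consequence of Lemma~\ref{le:3-dis}, and the only computation is the clean expansion of a quadratic in $A$. The reason for isolating it as a separate statement is presentational — it repackages the $3$-dissection of $1/f_1^3$ so that the bracket becomes a transparent polynomial in $f_1^3$ and $qf_9^3$, a form that will be more convenient to feed into the later congruence arguments where one works modulo powers of $3$ and the factors $9q$ and $27q^2$ can be discarded termwise.
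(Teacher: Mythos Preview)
Your proof is correct and is exactly the paper's approach: the paper's one-line proof just says to substitute $P(q^3)=f_1^3+3qf_9^3$ (which is \eqref{f13-3dis} rearranged) into \eqref{eq:1/f13--3dis}, and your expansion carries this out explicitly.
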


\begin{proof}
It follows by substituting $P(q^3)=f_1^3+3qf_9^3$ in \eqref{eq:1/f13--3dis}.
\end{proof}

At last, we recall the $p$-dissection formula of $f(-q):=(q;q)_\infty$.
\begin{lemma}[{\cite[Theorem 2.2]{CG2013}}]\label{cg13}
For any prime $p\geq5$,
\begin{align*}
f(-q)&=(-1)^{\frac{\pm p-1}{6}}q^{\frac{p^{2}-1}{24}}f(-q^{p^{2}})\\
&\quad\quad+\sum_{\substack{k=-\frac{p-1}{2}\\
k\neq\frac{\pm p-1}{6}}}^{\frac{p-1}{2}}(-1)^{k}q^{\frac{3k^{2}+k}{2}}f\left(-q^{\frac{3p^{2}+(6k+1)p}{2}},-q^{\frac{3p^{2}-(6k+1)p}{2}}\right).
\end{align*}
We further claim that for $-(p-1)/2\leq k\leq(p-1)/2$ and
$k\neq(\pm p-1)/6$,
\begin{align*}
\frac{3k^{2}+k}{2} \not\equiv
\frac{p^{2}-1}{24} \pmod{p}.
\end{align*}
Here for any prime $p\geq 5$,
\begin{equation*}
\frac{\pm p-1}{6}:=\left\{\begin{array}{ll}\frac{p-1}{6},&\ p \equiv 1
\pmod{6},\\[5pt]
\frac{-p-1}{6},&\ p \equiv -1 \pmod{6}.\end{array}\right.
\end{equation*}
\end{lemma}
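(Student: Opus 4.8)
The plan is to derive this $p$-dissection directly from Euler's pentagonal number theorem together with the Jacobi triple product, rather than invoking modular forms. First I would recall that
$$f(-q)=(q;q)_\infty=\sum_{k=-\infty}^{\infty}(-1)^k q^{\frac{3k^2+k}{2}},$$
so that the exponents are precisely the generalized pentagonal numbers $\tfrac{3k^2+k}{2}$. The idea is to sort the summation index $k$ by its residue modulo $p$: writing $k=pm+r$ with $m\in\mathbb{Z}$ and $r$ running over the symmetric residue system $-\tfrac{p-1}{2}\le r\le\tfrac{p-1}{2}$ (which is complete since $p$ is odd), every integer $k$ is represented exactly once.

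Next I would expand the exponent. A short computation gives
$$\frac{3k^2+k}{2}=\frac{3r^2+r}{2}+\frac{3p^2m^2+(6r+1)pm}{2},$$
and, since $p$ is odd, $(-1)^k=(-1)^{pm+r}=(-1)^m(-1)^r$. Substituting and interchanging the (absolutely convergent) sums yields
$$f(-q)=\sum_{r=-\frac{p-1}{2}}^{\frac{p-1}{2}}(-1)^r q^{\frac{3r^2+r}{2}}\sum_{m=-\infty}^{\infty}(-1)^m q^{\frac{3p^2m^2+(6r+1)pm}{2}}.$$
The crux is to recognize the inner sum as a Ramanujan theta function. With $f(a,b):=\sum_{n}a^{n(n+1)/2}b^{n(n-1)/2}$, matching the coefficients of $m^2$ and $m$ in the exponent forces $a=-q^{\frac{3p^2+(6r+1)p}{2}}$ and $b=-q^{\frac{3p^2-(6r+1)p}{2}}$, so the inner sum is exactly $f\!\left(-q^{\frac{3p^2+(6r+1)p}{2}},-q^{\frac{3p^2-(6r+1)p}{2}}\right)$. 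This already produces the claimed dissection apart from isolating one distinguished term.

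That distinguished term is the one where the theta function degenerates. When $6r+1=\pm p$, i.e. $r=\tfrac{\pm p-1}{6}$ (an integer precisely for the residue class of $p$ modulo $6$, and lying in the chosen range), the two arguments become $-q^{p^2}$ and $-q^{2p^2}$ in some order, and the pentagonal number theorem in the form $f(-q^{p^2},-q^{2p^2})=f(-q^{p^2})$ collapses the theta function to the single factor $f(-q^{p^2})$. Computing the accompanying data at $r=\tfrac{\pm p-1}{6}$ gives $\tfrac{3r^2+r}{2}=\tfrac{p^2-1}{24}$ and sign $(-1)^{(\pm p-1)/6}$, which is exactly the leading term displayed in the statement; separating it off leaves the asserted sum over the remaining $k$. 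For the congruence claim I would reduce it to an elementary quadratic statement: modulo $p$ (using $p\ge5$ so that $24$ is invertible) the congruence $\tfrac{3k^2+k}{2}\equiv\tfrac{p^2-1}{24}\equiv-\tfrac{1}{24}\pmod p$ is equivalent, after clearing denominators, to $(6k+1)^2\equiv0\pmod p$, hence to $6k+1\equiv0\pmod p$. The unique solution in the range $-\tfrac{p-1}{2}\le k\le\tfrac{p-1}{2}$ is $k=\tfrac{\pm p-1}{6}$, so every other admissible $k$ fails the congruence, which proves the claim.

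As for difficulty, there is no deep obstacle here: the whole argument is careful bookkeeping on the pentagonal number theorem. The step most prone to error is the algebraic matching that identifies the inner sum with the correct two-variable theta function, and in particular the verification that the degenerate case genuinely yields $f(-q^{p^2})$ with the stated coefficient and sign; one must keep the split of the exponent and the identity $(-1)^{pm}=(-1)^m$ perfectly consistent throughout. The congruence claim, by contrast, is immediate once it is recast as $(6k+1)^2\equiv0\pmod p$.
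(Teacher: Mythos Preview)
Your argument is correct and complete. The paper itself does not supply a proof of this lemma: it is quoted verbatim as \cite[Theorem~2.2]{CG2013} and used as a black box in the proof of Theorem~\ref{th:2}. What you have written is essentially the standard derivation (and indeed the one given in Cui--Gu): split Euler's pentagonal series according to the residue of the index modulo $p$, identify each inner sum as a two-variable theta function via the triple product, and isolate the degenerate term $6r+1=\pm p$ where the theta function collapses to $f(-q^{p^2})$. Your verification of the exponent $\tfrac{p^2-1}{24}$ and the sign at the distinguished index, as well as the reduction of the congruence claim to $(6k+1)^2\equiv 0\pmod p$, are all accurate. There is nothing to compare against in the paper, and no gap in your proposal.
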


\section{Proofs of Theorems \ref{th:1} and \ref{th:2}}

\begin{proof}[Proof of Theorem \ref{th:1}]
From \eqref{eq1}, we have
\begin{align*}
\sum_{n\ge 0}\pt(n)q^n=\frac{1}{f_1^3 f_3^3}\equiv \frac{f_1 f_3}{f_2^2 f_6^2}=\frac{1}{f_2^2 f_6^2}\left(\frac{f_2 f_8^2 f_{12}^4}{f_4^2 f_6 f_{24}^2}-q\frac{f_4^4 f_6 f_{24}^2}{f_2 f_8^2 f_{12}^2}\right) \pmod{4}.
\end{align*}
We now extract
\begin{align}
\sum_{n\ge 0}\pt(2n)q^n&\equiv \frac{1}{f_1^2 f_3^2}\cdot \frac{f_1 f_4^2 f_{6}^4}{f_2^2 f_3 f_{12}^2}\equiv  \frac{1}{f_1^2 f_3^2}\cdot \frac{f_1 f_1^8}{f_1^4 f_3}\notag\\
&=\frac{f_1^3}{f_3^3}=\frac{1}{f_3^3}\Big(P(q^3)-3qf_9^3\Big) \pmod{4}.
\end{align}
Since there are no terms in which the power of $q$ is $2$ modulo $3$, we arrive at \eqref{eq:4-6}.

On the other hand, we have
\begin{align}\label{eq:3ddd}
\sum_{n\ge 0}\pt(n)q^n=\frac{1}{f_1^3 f_3^3}=\frac{1}{f_3^3}\left(\frac{f_9^3}{f_3^{12}}\Big(P(q^3)^2+3qf_9^3P(q^3)
+9q^2f_9^6\Big)\right).
\end{align}
We extract
\begin{align}
\sum_{n\ge 0}\pt(3n+1)q^n=3\frac{f_3^6}{f_1^{15}}P(q).
\end{align}
This implies \eqref{eq:3-3}.

We also extract from \eqref{eq:3ddd} that
\begin{align}\label{eq:3n+2}
\sum_{n\ge 0}\pt(3n+2)q^n=9\frac{f_3^9}{f_1^{15}}.
\end{align}
This implies \eqref{eq:9-3}. We may further deduce from \eqref{eq:3n+2} that
\begin{align}\label{eq:3n+2mod27}
\sum_{n\ge 0}\pt(3n+2)q^n=9\frac{f_3^9}{f_1^{15}}\equiv 9f_3^4 \pmod{27}.
\end{align}
Since there are no terms on the right in which the power of $q$ is $1$ or $2$ modulo $3$, we obtain \eqref{eq:27-9}.

Furthermore, we deduce from \eqref{eq:3ddd} that
\begin{align}\label{eq:3n}
\sum_{n\ge 0}\pt(3n)q^n&=\frac{f_3^3}{f_1^{15}}P(q)^2=\frac{f_3^3}{f_1^{15}}\left(\frac{f_2^6f_3}{f_1^2f_6^2}+3q\frac{f_1^2f_6^6}{f_2^2f_3^3}\right)^2\notag\\
&\equiv \frac{f_2^{12} f_3^5}{f_1^{19} f_6^4}+q^2 \frac{f_6^{12}}{f_1^{11}f_2^4 f_3^3}\notag\\
&\equiv f_1 f_3\left(\frac{f_4}{f_{12}}+q^2 \frac{f_{12}^5}{f_4^5}\right)\notag\\
&=\left(\frac{f_2 f_8^2 f_{12}^4}{f_4^2 f_6 f_{24}^2}-q\frac{f_4^4 f_6 f_{24}^2}{f_2 f_8^2 f_{12}^2}\right)\left(\frac{f_4}{f_{12}}+q^2 \frac{f_{12}^5}{f_4^5}\right) \pmod{2}.
\end{align}
We extract
\begin{align}
\sum_{n\ge 0}\pt(6n)q^n&\equiv\frac{f_1 f_4^2 f_{6}^4}{f_2^2 f_3 f_{12}^2}\left(\frac{f_2}{f_{6}}+q \frac{f_{6}^5}{f_2^5}\right)=\frac{f_1}{f_3}\cdot \frac{f_4^2 f_{6}^4}{f_2^2 f_{12}^2}\left(\frac{f_2}{f_{6}}+q \frac{f_{6}^5}{f_2^5}\right)\notag\\
&=\left(\frac{f_2f_{16}f_{24}^2}{f_6^2f_8f_{48}}-q\frac{f_2f_8^2f_{12}f_{48}}{f_4f_6^2f_{16}f_{24}}\right)\cdot \frac{f_4^2 f_{6}^4}{f_2^2 f_{12}^2}\left(\frac{f_2}{f_{6}}+q \frac{f_{6}^5}{f_2^5}\right)\pmod{2}.
\end{align}
Hence
\begin{align}
\sum_{n\ge 0}\pt(12n+6)q^n&\equiv\frac{f_2^2 f_3^7 f_{8} f_{12}^2}{f_1^6 f_4 f_{6}^2 f_{24}}+\frac{f_2 f_3 f_4^2 f_{24}}{f_{6} f_{8} f_{12}}\notag\\
&\equiv \frac{f_1^4 f_3^{7} f_{1}^{8} f_{3}^{8}}{f_1^{6} f_1^4 f_{3}^4 f_{3}^{8}}+\frac{f_1^2 f_3 f_1^{8} f_{3}^{8}}{f_{3}^2 f_1^{8} f_{3}^4}\notag\\
&= f_1^2 f_3^3+f_1^2 f_3^3\equiv 0\pmod{2}.
\end{align}
Hence $\pt(12n+6)\equiv 0 \pmod{2}$.

We may also extract from \eqref{eq:3n}
\begin{align}
\sum_{n\ge 0}\pt(6n+3)q^n&\equiv\frac{f_2^4 f_3 f_{12}^2}{f_1 f_4^2 f_{6}^2}\left(\frac{f_2}{f_{6}}+q \frac{f_{6}^5}{f_2^5}\right)=\frac{f_3}{f_1}\cdot \frac{f_2^4 f_{12}^2}{f_4^2 f_{6}^2}\left(\frac{f_2}{f_{6}}+q \frac{f_{6}^5}{f_2^5}\right)\notag\\
&=\left(\frac{f_4f_6f_{16}f_{24}^2}{f_2^2f_8f_{12}f_{48}}+q\frac{f_6f_8^2f_{48}}{f_2^2f_{16}f_{24}}\right)\cdot \frac{f_2^4 f_{12}^2}{f_4^2 f_{6}^2}\left(\frac{f_2}{f_{6}}+q \frac{f_{6}^5}{f_2^5}\right)\pmod{2}.
\end{align}
Hence
\begin{align}
\sum_{n\ge 0}\pt(12n+9)q^n&\equiv\frac{f_3^4 f_6 f_8 f_{12}^2}{f_1^3 f_2 f_4 f_{24}}+\frac{f_1^3 f_4^2 f_6^2 f_{24}}{f_2^2 f_3^2 f_8 f_{12}}\notag\\
&\equiv \frac{f_3^4 f_3^2 f_1^8 f_3^8}{f_1^3 f_1^2 f_1^4 f_3^8}+\frac{f_1^3 f_1^8 f_3^4 f_3^8}{f_1^4 f_3^2 f_1^8 f_3^4}\notag\\
&=\frac{f_3^6}{f_1}+\frac{f_3^6}{f_1}\equiv 0\pmod{2}.
\end{align}
Hence $\pt(12n+9)\equiv 0 \pmod{2}$.

At last, we show \eqref{eq:5-5}. It follows from \eqref{eq1} and \eqref{eq:f13diss} that
\begin{align}
\sum_{n\ge 0}\pt(n)q^n&=\frac{1}{f_1^3 f_3^3}=\frac{1}{f_3^3}\left(\frac{f_{9}^3}{f_{3}^{12}}\Big(f_1^6+9qf_1^3 f_{9}^3+27q^2 f_{9}^6\Big)\right)\notag\\
&=\frac{f_1^6 f_9^3}{f_3^{15}}+9q\frac{f_1^3 f_9^6}{f_3^{15}}+27q^2 \frac{f_9^9}{f_3^{15}}\notag\\
&\equiv \frac{f_5}{f_{15}^3} f_1f_9^3+4q\frac{f_{45}}{f_{15}^3}f_1^3 f_9+2q^2 \frac{f_{45}^2}{f_{15}^3}\frac{1}{f_9}\notag\\
&\equiv \frac{f_5}{f_{15}^3}(E_0+E_1+E_2)(J_0^*+J_4^*)+4q\frac{f_{45}}{f_{15}^3}(J_0+J_1)(E_0^*+E_4^*+E_3^*)\notag\\
&\qquad+2q^2 \frac{f_{45}^2}{f_{15}^3}(P_0^*+P_4^*+P_3^*+P_2^*) \pmod{5}.
\end{align}
Here, $S_k$ and $S_k^*$ indicate series in which the powers of $q$ are congruent to $k$ modulo
$5$, whether $S$ is $E$ (for Euler), $J$ (for Jacobi) or $P$ (for partitions). Since there are no terms in which the power of $q$ is congruent to $3$ modulo $5$, we arrive at \eqref{eq:5-5}. We remark that the same technique is used in \cite[\S 36.4]{Hir2017}.
\end{proof}

\begin{proof}[Proof of Theorem \ref{th:2}]
We know from \eqref{eq:3n+2mod27} that
\begin{align*}
\sum_{n\ge 0}\pt(3n+2)q^n\equiv 9f_3^4 \equiv 9f_3 f_9\pmod{27}.
\end{align*}
We therefore extract
\begin{align}
\sum_{n\ge 0}\pt(9n+2)q^n\equiv 9f_1 f_3\pmod{27}.
\end{align}

Given a prime $p\geq5$, and integers $k$ and $m$ with $-(p-1)/2\leq k,m\leq(p-1)/2$, we consider the following quadratic congruence:
\begin{align*}
\frac{3k^{2}+k}{2}+3\cdot\frac{3m^{2}+m}{2}\equiv\frac{p^{2}-1}{6}\pmod p,
\end{align*}
that is,
\begin{align}\label{6k6m}
2(6k+1)^{2}+6(6m+1)^{2}\equiv0\pmod p.
\end{align}
We conclude that, for any odd prime $p$ with
$$\left(\frac{-3}{p}\right)=-1,$$
the solution to \eqref{6k6m} is $k=m=(\pm p-1)/6$.

It follows from Lemma \ref{cg13} that
\begin{align*}
\sum_{n\ge 0}\pt\left(9\left(pn+\frac{p^2-1}{6}\right)+2\right)q^n
\equiv9f(-q^p)f(-q^{3p})\pmod{27},
\end{align*}
and
\begin{align*}
\sum_{n\ge 0}\pt\left(9\left(p^2n+\frac{p^2-1}{6}\right)+2\right)q^n
\equiv9f(-q)f(-q^{3})\pmod{27}.
\end{align*}

At last, we induct on $\alpha\ge 1$ to obtain
\begin{align*}
\sum_{n\ge 0}\pt\left(9p^{2\alpha-1}n+\frac{3p^{2\alpha}+1}{2}\right)q^n
\equiv9f(-q^p)f(-q^{3p})\pmod{27}.
\end{align*}
This implies that
\begin{align*}
\pt\left(9p^{2\alpha-1}(pn+j)+\frac{3p^{2\alpha}+1}{2}\right)\equiv 0 \pmod{27},
\end{align*}
where $j=1$, $2$, $\ldots$, $p-1$. We arrive at \eqref{eq:p-cong}.
\end{proof}

\section{Final remarks}

Using an algorithm (which involves modular forms) due to Radu and Sellers \cite{Rad2009,RS2011}, we are able to prove the following congruences modulo $7$ and $11$:

\begin{theorem}
For $n\ge 0$, we have
\begin{gather}
\pt(21n + 7,10,16,18)\equiv 0 \pmod{7},\\
\pt(121n + 39, 61, 72, 94, 105, 116)\equiv 0 \pmod{11}.
\end{gather}
\end{theorem}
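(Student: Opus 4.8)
The plan is to recast both families of congruences as statements about the vanishing modulo a prime of a holomorphic modular form, and then to reduce each to a finite coefficient check via Sturm's bound; this is precisely the content of the Radu--Sellers algorithm \cite{Rad2009,RS2011}. From \eqref{eq1}, the generating function is essentially an eta-quotient: writing $\eta(\tau)=q^{1/24}f_1$, we have $\sum_{n\ge0}\pt(n)q^n=q^{1/2}\big/\big(\eta(\tau)^3\eta(3\tau)^3\big)$, so $\pt$ is governed by a weight $-3$ eta-quotient on a congruence subgroup whose level involves $3$ and the modulus of the arithmetic progression.

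First I would fix, for the modulus $7$ case, the progression modulus $m=21$ and, for the modulus $11$ case, $m=121$, together with the prescribed residue sets $R_7=\{7,10,16,18\}$ and $R_{11}=\{39,61,72,94,105,116\}$. Radu's orbit analysis---the determination of the admissible residue set $P_{m,r}(t)$ under the relevant action on the arithmetic progressions---should show that each of these sets is exactly one such orbit, so that the combined series $\sum_{t\in R}\sum_{n\ge0}\pt(mn+t)q^n$ is the natural modular object to study. Then I would invoke the core of the algorithm, which produces an auxiliary eta-quotient $g(\tau)=\prod_{\delta\mid N}\eta(\delta\tau)^{e_\delta}$ and an integer $\kappa$ such that
$$g(\tau)\sum_{t\in R}\Big(\sum_{n\ge0}\pt(mn+t)q^{n+\kappa}\Big)$$
is a holomorphic modular form of some explicit weight $k$ on $\Gamma_0(N)$ with a specified character, for a level $N$ dividing a suitable multiple of $6m$.

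The exponents $e_\delta$ are selected via the Ligozat valence conditions so that the order of vanishing at every cusp of $\Gamma_0(N)$ is nonnegative, which guarantees membership in $M_k(\Gamma_0(N),\chi)$. Reducing modulo $7$ (resp.\ $11$), the desired congruence becomes the assertion that this modular form is $\equiv0$ modulo the prime. By Sturm's theorem a form in $M_k(\Gamma_0(N),\chi)$ vanishes modulo a prime $u$ as soon as its $q$-expansion does so up to the bound $\tfrac{k}{12}\,[\mathrm{SL}_2(\mathbb{Z}):\Gamma_0(N)]$, so I would compute the relevant eta-quotient expansions to that order and verify the vanishing modulo $7$ and modulo $11$ directly.

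The main obstacle is structural and computational rather than conceptual. One must certify holomorphy---the nonnegativity of all cusp orders of the chosen eta-quotient---since this is exactly the step that legitimizes replacing an infinite congruence by the finite Sturm check; and one must then carry out the coefficient verification, which for the level $121$ case is sizeable because $[\mathrm{SL}_2(\mathbb{Z}):\Gamma_0(121)]$, and hence the Sturm bound, grows quadratically in the prime. Everything beyond these two checks is guaranteed by the Radu--Sellers framework, so the hard part is simply ensuring the multiplier $g$ lands the series in a holomorphic space of modest enough weight that the resulting finite verification remains feasible.
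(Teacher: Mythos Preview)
Your proposal is correct and follows exactly the approach the paper itself indicates: the paper does not give a detailed argument for this theorem but simply states that the congruences are established via the Radu--Sellers algorithm, and you have accurately outlined the structure of that algorithm (orbit determination of residue classes, multiplication by an auxiliary eta-quotient to land in a holomorphic space on $\Gamma_0(N)$, and verification via Sturm's bound). There is nothing to add or correct.
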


However, it is still unclear if there exist any elementary proofs of these congruences.

\subsection*{Acknowledgements}

C.~Wang was partially supported by the outstanding doctoral dissertation cultivation plan of action (No.~YB2016028).

\bibliographystyle{amsplain}

\end{document}